\newcommand{\R}{\mathbb{R}}
\newcommand{\C}{\mathbb{C}}
\newcommand{\T}{\mathbb{T}}
\newcommand{\D}{\mathbb{D}}
\newcommand{\B}{\mathbb{B}}
\newtheorem{theorem}{Theorem}
\newtheorem{proposition}[theorem]{Proposition}
\theoremstyle{definition}
\newtheorem{definition}[theorem]{Definition}
\newtheorem{example}[theorem]{Example}
\newtheorem{question}{Open Problem}
\DeclareMathOperator{\BMO}{BMO}
\DeclareMathOperator{\IMO}{IMO}
\DeclareMathOperator{\IDA}{IDA}
\DeclareMathOperator{\rank}{rank}
\DeclareMathOperator{\linspan}{span}
\begin{document}

\author[Z. Hu]{Zhangjian Hu}
\address{Department of Mathematics, Huzhou University, Huzhou, Zhejiang, China}
\email{huzj@zjhu.edu.cn}

\author[J. A. Virtanen]{Jani A. Virtanen}
\address{Department of Mathematics and Statistics, University of Reading, Reading, England}
\email{j.a.virtanen@reading.ac.uk}

\title{On the Berger-Coburn phenomenon}

\keywords{Schatten class, compact operator, Hankel operator, Fock space}

\subjclass{Primary 47B35, 47B10; Secondary 32A25, 32A37}

\thanks{Z.~Hu was supported in part by the National Natural Science Foundation of China (12071130, 12171150) and J.~Virtanen was supported in part by Engineering and Physical Sciences Research Council grant EP/T008636/1.}

\begin{abstract}
In their previous work, the authors proved the Berger-Coburn phenomenon for compact and Schatten $S_p$ class Hankel operators $H_f$ on generalized Fock spaces when $1<p<\infty$, that is, for a bounded symbol $f$, if $H_f$ is a compact or Schatten class operator, then so is $H_{\bar f}$. More recently J.~Xia has provided a simple example that shows that there is no Berger-Coburn phenomenon for trace class Hankel operators on the classical Fock space $F^2$. Using Xia's example, we show that there is no Berger-Coburn phenomena for Schatten $S_p$ class Hankel operators on generalized Fock spaces $F^2_\varphi$ for any $0<p\le 1$. Our approach is based on the characterization of Schatten class Hankel operators while Xia's approach is elementary and heavily uses the explicit basis vectors of $F^2$, which cannot be found for the weighted Fock spaces that we consider. We also formulate four open problems.
\end{abstract}

\maketitle

\section{Introduction}
Given a weight $\varphi : \C^n\to \R_+$, we say that a measurable function $f: \C^n\to \C$ is in $L^2_\varphi$ if
$$
	\|f\|_{\varphi} = \left(\int_{\C^n} |f(z)|^2 e^{-2\varphi(z)} dv(z)\right)^{1/2} < \infty,
$$
where $v$ is the standard Lebesgue measure on $\C^n$. For an open set $\Omega\subset\C^n$, we denote by $H(\Omega)$ the space of all holomorphic functions on $\C^n$. The \emph{weighted Fock space} $F^2_\varphi$ is defined by
$$
	F^2_\varphi = H(\C^n) \cap L^2_\varphi.
$$
In this work, we consider the weights $\varphi\in C^2(\C^n)$ that satisfy the condition that there are two positive constants $m$ and $M$ such that
\begin{equation}\label{weights-a}
	m\omega_0 \le \mathrm{i} \partial \overline{\partial} \varphi \le M\omega_0
\end{equation}
in the sense of currents,  where $\omega_0 = \mathrm{i} \partial \overline{\partial}  |z|^2 $ is the Euclidean-K\"{a}hler form on $\C^n$. The expression (\ref{weights-a}) is also denoted by $\mathrm{i} \partial \overline{\partial} \varphi \simeq \omega_0$, and when $n=1$, it simplifies to the form
$$m
	 \le \Delta \varphi \le M,
$$
where $\Delta\varphi$ is the Laplacian of $\varphi$.

Notice that the standard radial weights $\varphi(z) = \alpha|z|^2$ with $\alpha>0$ satisfy~\eqref{weights-a}. For its role in a number of important developments in the theory of Fock spaces, we single out the classical Fock space $F^2$ of standard Gaussian square-integrable entire functions, which corresponds to $\alpha=1$. Also, each Fock-Sobolev space $F^{2,m}$  consisting of entire functions $f$ for which $\partial^{\alpha} f \in F^2 $ for all multi-indices $|\alpha|\le m$ can be viewed as a $F^2_\varphi$ with some $\varphi$ satisfying \eqref{weights-a}.

In this note we are concerned with \emph{Hankel operators} $H_f$ defined for symbols $f\in L^\infty(\C^n)$ by
\begin{equation}\label{e:T and H}
	H_fg = (I-P)(fg)
\end{equation}
for $g\in F^2_\varphi$. Clearly $H_f : F^2_\varphi \to L^2_\varphi$ is a bounded operator if $f\in L^\infty$. Denote by $H^2$ the Hardy space on the unit circle $\T$ and by $A^2(\B_n)$ the Bergman space on the unit ball
$$
	\B_n = \left\{ z\in \C^n : \|z\|^2 = \langle z,z \rangle = z_1\bar z_1 + \ldots + z_n\bar z_n < 1\right\},
$$
that is,
$$
	H^2 = \{f\in L^2(\T) : f_k = 0 \ {\rm for}\ k<0\},
$$
where $f_k$ is the $k$th Fourier coefficient of $f$, and
$$
	A^2 = \left\{f\in H(\B_n) : \int_{\B_n} |f(z)|^2\, dv(z)<\infty \right\}.
$$
We denote by $P$ both the orthogonal projection (aka the Riesz projection) of $L^2(\T)$ onto $H^2$ and the orthogonal projection (aka the Bergman projection) of $L^2(\B_n)$ onto $A^2$. Given a bounded symbol $f$, we define the Hankel operator $H_f$ as in \eqref{e:T and H}.

Of particular interest is the following property, which we refer to as the Berger-Coburn phenomenon for an operator ideal $\mathcal{M}$.

\begin{definition}
Let $P$ an orthogonal projection on a Hilbert space $H$ and $A$ be a linear operator on $H$. Define the (abstract) Hankel operator $H_A : P(H) \to H$ by
$$
	H_A = (I-P)A.
$$
Given an operator ideal $\mathcal{M}$ in $B(P(H), H)$ and a set $\mathcal{A}$ of linear operators $A: H\to H$, we say that the \emph{Berger-Coburn phenomenon} (BCP) holds for $(\mathcal{M}, \mathcal{A})$ if
\begin{equation}\label{e:BCP}
	H_A \in \mathcal{M} \iff H_{A^*} \in \mathcal{M}
\end{equation}
for every $A\in \mathcal{A}$, where $A^*$ is the adjoint of $A$.
\end{definition}

In what follows, we only consider the Hankel operators $H_f = H_{M_f}$ defined above acting from the Fock space, the Bergman space, or the Hardy space to the corresponding $L^2$ space when $\mathcal{A} = \{M_f : f\in L^\infty\}$. However, we have given the preceding definition in a quite general context as it may be of interest in more abstract settings. Observe that in these three cases $M^*_f=M_{\bar f}$ so that \eqref{e:BCP} is equivalent to
$$
	H_f\in \mathcal{M} \iff H_{\bar f}\in \mathcal{M}
$$
for every $f\in L^\infty$.

Among the most important operator ideals are those of compact operators and of Schatten class operators. To define these for bounded linear operators $T : H_1 \to H_2$ between two Hilbert spaces, we denote by $s_j(T)$ the singular values of $T$, that is,
\begin{equation}\label{e:singular values}
	s_j(T) = \inf \{ \|T-K\| : K : H_1\to H_2,\ \rank K\le j\},
\end{equation}
where $\rank K$ denotes the rank of $K$. Recall that the operator $T$ is said to be compact if and only if $s_j(T) \to 0$, and we write $K(H_1, H_2)$ for the operator ideal of all compact operators acting from $H_1$ to $H_2$. For $0<p<\infty$, we say that $T$ is in the Schatten class $S_p$ and write $T\in S_p(H_1, H_2)$ if $\|T\|_{S_p}^p = \sum_{j=0}^\infty \left(s_j(T)\right)^p < \infty$, which defines a norm when $1\le p<\infty$ and a quasinorm otherwise. Note that $S_p$ are also called the Schatten-von Neumann classes or trace ideals; for further details, see, e.g., \cite{Simon}.

\section{Berger-Coburn phenomenon for compact operators}

Let $f\in L^\infty$. In 1987, Berger and Coburn~\cite{BC1987} proved that the BCP holds for $K(F^2, L^2(\C^n, e^{-|z|^2}dv))$ using heavy machinery of $C^*$-algebras and Hilbert space theory. A few years later, a more elementary proof of this result was given by Stroethoff~\cite{St92}, who derived it as a consequence of his characterization of compactness, that is, $H_f$ is compact if and only if
\begin{equation}
	\|(I-P)(f\circ \tau_\lambda)\|_{L^2(\C^n)} \to 0\ {\rm as}\ |\lambda|\to \infty,
\end{equation}
where $\tau_\lambda(z) = z+\lambda$ is the translation.

One reason which makes Berger and Coburn's compactness result so striking is that analogous statements in the settings of the Hardy space and the Bergman space fail as seen in the following examples.

\begin{example}\label{examples for Hardy and Bergman}
Let $b$ be the Blaschke product
$$
	b(z) = \prod_{k=1}^\infty \frac{\alpha_k-z}{1-\bar \alpha_k z}\frac{|\alpha_k|}{\alpha_k}
	\quad (z\in\D),
$$
where $\alpha_k = 1-\frac1{2^k}$. Because $b$ is a bounded analytic function, $H_b = 0$ is trivially compact, but it can be shown that $\bar b$ is not in the little Bloch space and hence $H_{\bar b}$ is not compact according to Axler's characterization of compact Hankel operators with conjugate analytic symbols (see pp.~318--319 of~\cite{HaggerV} and the related references therein). Thus, there is no BCP for $K(A^2, L^2(\D))$. Another example in $A^2$ to the same effect can be obtained from the bounded analytic function $f(z)=\exp\left(\frac{z+1}{z-1}\right)$ defined for $z\in\D$.

 The counterexample $b$ above can be modified to show that there is no BCP for $K(A^2, L^2(\B_n))$. In fact, we can simply extend $b$ by setting $b(z) = b(z_1)$ for $z=(z_1, z_2, \cdots, z_n)\in \B_n$. Then $b$ is a bounded holomorphic function on $\B_n$ and $H_b=0$. On the other hand, 
$$
	\lim_{|z|\to 1}(1-|z|)\frac{\partial b(z)}{\partial z_1} \neq 0
$$ 
so that  $b$ is not in the little Bloch space as Theorem 16 in \cite{Zh98} shows. Now Theorem~E of \cite{Li92} implies that $H_{\bar b}\notin K(A^2, L^2(\B_n))$.

To see that there is no BCP for $K(H^2, L^2(\T))$, we can choose a function $f$ in $H^\infty\setminus VMO$ (using a result of Wolff~\cite{Wolff} on the class $QA = VMO\cap H^\infty$), so that $H_f=0$ is compact while $H_{\bar f}$ cannot be compact because of the characterization that states $H_f$ and $H_{\bar f}$ are both compact if and only if $f\in VMO$.
\end{example}

It may be reasonable to ask the following question.

\begin{question}\label{q1}
Are there nontrivial operator ideals in 
$$
	B(H^2(\T), L^2(\T))\quad\text{\rm or in}\quad B(A^2(\B_n), L^2(\B_n))
$$
for which the Berger-Coburn phenomenon holds?
\end{question}

We now turn our attention back to Fock spaces. While we already know this from the result of Berger and Coburn, it is still worth pointing out that examples in the spirit of Example~\ref{examples for Hardy and Bergman} are doomed to fail because if $f$ is a bounded entire function, then it is constant and hence $H_{\bar f}$ is also compact. In fact, this is an indication of why there is BCP for $K(F^2, L^2(\C^n, e^{-|z|^2}dv)$ but not for $K(A^2, L^2(\B_n))$. A complete explanation is given by the following characterization of Hagger and Virtanen~\cite{HaggerV} (obtained using limit operator techniques): Let $\Omega\in \{\B_n, \C^n\}$, $X^2=A^2_\alpha$ if $\Omega=\B_n$, $X^2=F^2_\alpha$ if $\Omega=\C^n$, and denote by $\beta \Omega$ the Stone-\v{C}ech compactification of $\Omega$. Then $H_f : X^2_\alpha \to L^2_\alpha(\Omega)$ is compact if and only if for every $x\in \beta\Omega\setminus\Omega$ there is a $h_x\in H^\infty(\Omega)$ such that for all nets $(z_\gamma)$ in $\Omega$ converging to x, we have
\begin{equation}\label{e:HaggerV}
	\|f\circ \phi_{z_\gamma} - h_x\|_{L^p_\alpha} \to 0 \ {\rm as}\ z_\gamma\to x,
\end{equation}
where $\phi_z(w) = w-z$ if $\Omega=\C^n$ and $\phi_z$ is the (unique) geodesic symmetry interchanging $0$ and $z$ if $\Omega = \B_n$. Now, by Liouville's theorem, the condition in~\eqref{e:HaggerV} is symmetric in $f$ and $\bar f$ if $\Omega=\C^n$, and hence $H_f : F^2_\alpha \to L^2_\alpha$ is compact if and only if $H_{\bar f}$ is compact.

Finally we mention that a fourth proof of Berger and Coburn's result can be found in \cite{HuVi3} (see Theorem~1.2), which treats the most general setting currently known in terms of weights $\phi$ (and even proves that $H_f : F^p_\phi \to L^q_\phi$ is compact if and only if $H_{\bar f}$ is compact when $0<p\le q<\infty$ or $1\le q<p<\infty$). While the weights considered in \cite{HuVi3} satisfy a slightly stronger condition than \eqref{weights-a}, we note that we can prove the case $p=q=2$ for the weights satisfying \eqref{weights-a}. 

\section{Berger-Coburn phenomenon for Schatten class operators}

A question of whether the Berger-Coburn phenomenon holds for the Schatten classes $S_p$ arises as a natural next step of the work in the previous section. In the case of $H^2$ or $A^2$, we get a negative answer immediately from Example~\ref{examples for Hardy and Bergman}.

Next we consider Hankel operators on Fock spaces. In 2004, for Hilbert-Schmidt Hankel operators on $F^2$, the question was answered in the affirmative by Bauer~\cite{Bauer} using an approach similar to \cite{St92}. Further, on page 1384 of \cite{XZ2004} (see Note added January 31, 2003), Xia and Zheng state that their preprint in September 2000 contained Bauer's result in the one-dimensional case. They further comment that because their methods were ``rather ad hoc'' and the preprints of Bauer \cite{Bauer} (April 2002) and Stroethoff (September 2002, unpublished) proved the result in $\C^n$, they decided not to include their result in the final version of \cite{XZ2004}. Their note described the remaining cases as ``rather challenging.'' Indeed, no further progress was made until the following result of Hu and Virtanen~\cite{HuVi4, HuVi4c}.

\begin{theorem}\label{p>1}
Let $\varphi\in C^2(\C^n)$ satisfy the condition in~\eqref{weights-a}. If $1<p<\infty$, then the {\rm BCP} holds for $S_p(F^2_\varphi, L^2_\varphi)$, that is, for $f\in L^\infty$, $H_f\in S^p$ if and only $H_{\bar f}\in S_p$ with the $S_p$-norm estimate
$$
	\|H_{\bar f}\|_{S_p} \le C \|H_f\|_{S_p},
$$
where the constant $C$ is independent of $f$.
\end{theorem}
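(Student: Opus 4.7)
The plan is to prove Theorem \ref{p>1} by establishing a characterization of $S_p$-membership of $H_f$ in terms of a symbol quantity that is manifestly invariant under $f \mapsto \bar f$, from which the BCP is immediate. Concretely, fix a radius $r > 0$ (small, depending on the constants $m, M$ in \eqref{weights-a}) and consider the Euclidean mean oscillation
$$
\MO_r(f)(z) = \left( \frac{1}{|B(z,r)|} \int_{B(z,r)} \left| f(w) - \widehat{f}_r(z) \right|^2 dv(w) \right)^{1/2},
$$
where $\widehat{f}_r(z)$ is the volume average of $f$ on $B(z,r)$. Since $|f-c|^2 = |\bar f - \bar c|^2$ pointwise, one has $\MO_r(\bar f)(z) = \MO_r(f)(z)$.

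The heart of the proof is the two-sided estimate
$$
\|H_f\|_{S_p} \asymp \|\MO_r(f)\|_{L^p(\C^n, dv)}, \qquad 1 < p < \infty, \quad f \in L^\infty,
$$
from which Theorem \ref{p>1} follows by applying it to $f$ and $\bar f$ and using the conjugation-symmetry of $\MO_r$. For the upper bound, I would take a locally finite cover $\{B(z_k, r)\}$ of $\C^n$ of bounded multiplicity adapted to the Fock-K\"ahler geometry, split $f = h_k + g_k$ on each doubled ball $B(z_k, 2r)$ with $h_k$ the best holomorphic $L^2$-approximation, and decompose $H_f$ into localized pieces $H_{g_k \chi_k}$ whose $S_p$-norms are controlled by Schur-type tests built on pointwise estimates of the Bergman kernel $K_\varphi$. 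For the lower bound, one tests $H_f$ and $H_f^*$ against normalized reproducing kernels $k_z$ concentrated near $z$, and combines this with an $L^p$ duality argument (whence the need for $1 < p < \infty$) to recover pointwise control on $\MO_r(f)$ from trace pairings with rank-one operators.

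The principal technical obstacle is the absence of an explicit reproducing kernel for $F^2_\varphi$ when $\varphi$ is not radial: for $\varphi(z) = \alpha|z|^2$ one has $K_\alpha(z,w) = c_\alpha e^{2\alpha \langle z, \bar w \rangle}$, but for general $\varphi$ satisfying \eqref{weights-a} this must be replaced by the Gaussian off-diagonal estimate
$$
|K_\varphi(z, w)| \le C\, e^{\varphi(z) + \varphi(w)}\, e^{-\varepsilon |z - w|^2},
$$
established via H\"ormander's weighted $L^2$-$\bar\partial$ machinery and the curvature condition $\mathrm{i}\partial\overline{\partial}\varphi \simeq \omega_0$. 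A related subtlety is that the symmetric characterization above is expected to fail at the endpoint $p = 1$: there, the natural symbol quantity must account for $\IDA$ rather than $\MO$, and this quantity is \emph{not} invariant under complex conjugation, which is precisely why the BCP can break down for $p \le 1$, in agreement with the counterexamples developed elsewhere in this note.
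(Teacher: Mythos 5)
Your overall strategy---reduce the BCP to a characterization of $\|H_f\|_{S_p}$ by a symbol quantity that is invariant under $f\mapsto\bar f$---is exactly the right one, and it is how the result is actually obtained in the literature: the paper does not prove Theorem~\ref{p>1} itself but imports it from \cite{HuVi4, HuVi4c}, where the proof amounts to combining Theorem~\ref{char1} ($H_f\in S_p\Leftrightarrow f\in\IDA^p$) with Theorem~\ref{char2} ($H_f,H_{\bar f}\in S_p\Leftrightarrow f\in\IMO^p$) and the embedding $L^\infty\cap\IDA^p\subset\IMO^p$ for $1<p<\infty$. Your claimed equivalence $\|H_f\|_{S_p}\asymp\|\MO_r(f)\|_{L^p}$ for bounded $f$ and $1<p<\infty$ is true and is precisely this chain of results. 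The ``upper bound'' half of your sketch (covering by balls, local holomorphic approximation, Schur/interpolation estimates for the localized pieces) is sound in spirit and in fact only needs the weaker hypothesis $G_{2,r}(f)\in L^p$, since $G_{2,r}(f)\le\MO_r(f)$ pointwise.

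The gap is in the lower bound, which is the entire content of the theorem. Testing $H_f$ against normalized reproducing kernels $k_z$ yields $\|H_fk_z\|\gtrsim G_{2,r}(f)(z)$, i.e.\ control of the \emph{distance to holomorphic functions}, not of the mean oscillation; and $H_f^*u=P(\bar f(I-P)u)$ is not $H_{\bar f}$, so testing the adjoint does not smuggle in the conjugate symbol. No amount of Schatten duality upgrades $G_{2,r}(f)$ to $\MO_r(f)$: the decisive objection is that your sketched argument never uses $f\in L^\infty$, yet the inequality $\|\MO_r(f)\|_{L^p}\lesssim\|H_f\|_{S_p}$ is \emph{false} without it --- take $f(z)=z$ on $\C$, for which $H_f=0$ while $\MO_r(f)$ is a positive constant (this is exactly the example in Section~4 of the paper). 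So any correct proof must exploit boundedness of $f$ in an essential way. The missing ingredient is the implication $f\in L^\infty,\ G_{2,r}(f)\in L^p\Rightarrow \MO_r(f)\in L^p$ for $p>1$, whose proof in \cite{HuVi4} requires controlling how the local holomorphic approximants $h_k$ drift across overlapping balls using $\|f\|_\infty$ (via derivative bounds for bounded holomorphic functions); this is where the restriction $p>1$ genuinely enters --- not through $L^p$ or $S_p$ duality --- and it is consistent with the failure of the embedding, and of the BCP, at $p\le 1$ exhibited by Theorem~\ref{p<=1}. As written, your argument would ``prove'' the BCP for all $0<p<\infty$ and for unbounded symbols, both of which are false.
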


The case $p=1$ for the classical Fock space was recently solved by Xia~\cite{Xia2023}, who showed that if
\begin{equation}\label{e:ex}
	f(z) = \begin{cases}
	\frac1z & {\rm if}\ |z|\ge 1\\
	0 & {\rm if}\ |z|<1
	\end{cases}
\end{equation}
then $H_f : F^2(\C) \to L^2(\C, e^{-|z|^2}\, dv)$ is in the trace class but $H_{\bar f}$ is not, so there is no BCP for $S_1(F^2(\C), L^2(\C, e^{-|z|^2}\, dv))$. We note that Xia's work cannot be extended to weighted Fock spaces $F^2_\varphi$.

We present an alternate approach that works for weights satisfying~\eqref{weights-a}. Indeed, applying the characterizations of Schatten class Hankel operators of Hu and Virtanen \cite{HuVi4}, we show that Xia's example can be used to prove that there is no BCP for any $S_p(F^2_\varphi, L^2_\varphi)$ with $0<p\le 1$. To accomplish this, we need some preliminaries.

For $f\in L^2_{\mathrm{loc}}$ (the set of all locally square integrable functions on $\C^n$) and $0<q<\infty$, define
\begin{equation}\label{G-q-r}
	G_{q,r}(f)(z)=\inf_{h\in H(B(z,r))} \left(\frac{1}{|B(z,r)|}\int_{B(z,r)} |f-h|^q dv \right)^{\frac{1}{q}}\quad (z\in \C^n).
\end{equation}
For $0< s\le \infty$, the space $\mathrm{IDA}^{s,q}$ (Integral Distance to Analytic Functions) consists of all $f\in L^2_{\mathrm {loc}}$ such that
\begin{equation}\label{IDA definition}
	\|f\|_{\mathrm{IDA}^{s,q}} =  \|  G_{q,r}(f)\|_{L^s}<\infty
\end{equation}
for some $r>0$. When $1\le q<\infty$ and $0<s<\infty$, notice that in $\IDA^{s,q}$ the parameter $r$ is suppressed because the space is independent of $r$ and different values of $r$ give equivalent norms on each space (see Remark~3.8 of~\cite{HuVi3}).

We remark that the $\IDA^{p,2}$ type conditions were first introduced by Luecking~\cite{Luecking1992} in his study of Schatten class Hankel operators on the Bergman space $A^2$ on the unit disk.

It may be helpful to compare the spaces $\IDA^{s,q}$ with the spaces $\BMO^p$ of functions of bounded mean oscillation. Recall the space $\BMO^p$ is defined by
$$
	\BMO^p = \left\{ f\in L^p_{\rm loc} : \sup_{z\in\C^n} MO_{p,r} (f)(z) <\infty\right\},
$$
where
$$
	MO_{p,r}(f)(z) = \left( \frac1{|B(z,r)|} \int_{B(z,r)} |f - \widehat f_r(z)|^p\, dv\right)^{1/p}
$$
and $\widehat f_r$ is the average function defined on $\C^n$ by
$$
	\widehat f_r(z) = \frac1{|B(z,r)|} \int_{B(z,r)} f\, dv.
$$
It is well known that the space $\BMO^p$ is independent of $r$.

\medskip

We can now single out a large class of entire functions in $\IDA^{s,q}$ that are not in $\BMO^p$.

\begin{proposition}
If $f\in H(\C)$ and $f'$ is not constant, then
$$
	f\in \IDA^{s,q}\setminus \BMO^p
$$
whenever $1\le p,q<\infty$ and $0<s\le \infty$.
\end{proposition}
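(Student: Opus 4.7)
The plan is to split the claim into two independent parts: the inclusion $f \in \IDA^{s,q}$, which will be essentially free, and the exclusion $f \notin \BMO^p$, which will require a pointwise lower bound for the mean oscillation in terms of $|f'|$.

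For the inclusion, I would observe that since $f \in H(\C)$ is itself holomorphic on every ball $B(z,r)$, the admissible choice $h = f$ in the infimum in \eqref{G-q-r} forces $G_{q,r}(f) \equiv 0$ on $\C$. Consequently $\|f\|_{\IDA^{s,q}} = 0$ for every $r>0$ and every $0 < s \le \infty$, so $f \in \IDA^{s,q}$.

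For the exclusion, I would first exploit the mean value property for holomorphic functions to obtain $\widehat f_r(z_0) = f(z_0)$ at every $z_0 \in \C$. Fixing such a $z_0$ and setting $g(\zeta) = f(z_0 + \zeta) - f(z_0)$, the function $g$ is entire with $g(0)=0$ and $g'(0) = f'(z_0)$, and
$$
	MO_{p,r}(f)(z_0)^p = \frac{1}{|B(0,r)|}\int_{B(0,r)} |g(\zeta)|^p\, dv(\zeta).
$$
The aim is now a pointwise estimate
$$
	|f'(z_0)| = |g'(0)| \le C_{p,r}\, MO_{p,r}(f)(z_0),
$$
with $C_{p,r}$ independent of $z_0$. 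I would derive this by combining Cauchy's formula $g'(0) = (2\pi i)^{-1}\oint_{|\zeta|=r/2} g(\zeta)/\zeta^2\, d\zeta$, which gives $|g'(0)| \le (2/r) \max_{|\zeta|=r/2}|g(\zeta)|$, with the sub-mean value inequality for the subharmonic function $|g|^p$ applied on balls $B(\zeta_0, r/2) \subset B(0,r)$ with $|\zeta_0| = r/2$, which controls $\max_{|\zeta|=r/2}|g(\zeta)|^p$ by the $L^p$-average of $|g|^p$ over $B(0,r)$.

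The proof concludes by invoking Liouville's theorem: since $f'$ is a non-constant entire function, it is unbounded, so $\sup_{z_0 \in \C}|f'(z_0)| = \infty$, and the inequality above yields $\sup_{z_0} MO_{p,r}(f)(z_0) = \infty$ for every $r>0$. Hence $f \notin \BMO^p$ for any $1 \le p < \infty$. The only step needing a little care is tracking the constants in the Cauchy/sub-mean value estimate, but this is routine and the two pieces of the argument are otherwise independent.
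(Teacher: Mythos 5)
Your proof is correct and follows essentially the same route as the paper: $G_{q,r}(f)\equiv 0$ gives the $\IDA^{s,q}$ membership for free, and the exclusion from $\BMO^p$ rests on the pointwise bound $|f'(z)|\le C\, MO_{p,r}(f)(z)$ together with the unboundedness of the non-constant entire function $f'$. The only difference is that the paper cites Lemma~2.1 of Luecking for this pointwise inequality, whereas you derive it directly from Cauchy's formula and the sub-mean value property of $|g|^p$ (and you make explicit the mean value identity $\widehat f_r(z)=f(z)$, which the paper uses tacitly); both are fine.
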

\begin{proof}
Since $f\in H(\C)$, that $f\in \IDA^{s,q}$ for all $0<q<\infty$ and $0<s<\infty$ is an trivial consequence of that $G_{q,r}(f)\equiv 0$. On the other hand, it follows from Lemma~2.1 of~\cite{Luecking} that there is a positive constant $C$ depending only on $p$ and $r$ such that
$$
	|f'(z)| \le C\left( \frac1{|B(z,r)|} \int_{B(z,r)} |f-f(z)|^p dv\right)^\frac1p.
$$
Further, since $f'' \ne 0$,
$$
	\limsup_{z\to\infty} |f'(z)|^s = \infty,
$$
and so
$$
	\limsup_{z\to \infty} MO_{p,r}(f)(z)^s = \infty,
$$
which implies that $f\notin \BMO^p$.
\end{proof}

The following result characterizes Schatten class Hankel operators in terms of $\IDA^p$ and it is contained in Theorem~1.1 of \cite{HuVi4}, where $\IDA^p=\IDA^{p,2}$. A similar characterization for Hankel operators on the (unweighted) Bergman space $A^2(\D)$ to be in Schatten classes $S_p$ when $1\le p<\infty$ was obtained by Luecking~\cite{Luecking1992}.

\begin{theorem}\label{char1}
Let $f\in L^\infty$, $\varphi\in C^2(\C^n)$ be real valued with $\mathrm{i} \partial \overline{\partial} \varphi \simeq \omega_0$, and $0<p<\infty$. Then $H_f \in S_p(F^2_\varphi, L^2_\varphi)$ if and only if $f\in \IDA^p$.
\end{theorem}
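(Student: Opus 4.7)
The plan is to prove both directions by a lattice decomposition combined with local analytic approximation. Fix $r>0$ small (relative to the constants $m,M$ in \eqref{weights-a}) and choose a covering of $\C^n$ by balls $\{B(z_k,r)\}$ with uniformly finite overlap, together with a subordinate smooth partition of unity $\{\chi_k\}$. The condition $\mathrm{i}\partial\overline{\partial}\varphi\simeq\omega_0$ supplies the pointwise Bergman-kernel bounds I need: the normalized reproducing kernel $k_z$ of $F^2_\varphi$ satisfies an off-diagonal Gaussian decay of the form $|k_z(w)|e^{-\varphi(w)}\lesssim e^{-\varphi(z)-c|w-z|^2}$, which replaces the explicit Gaussian of the classical case and underpins all the subsequent estimates.

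For sufficiency, suppose $f\in\IDA^p$. For each $k$ select a near-best analytic approximant $h_k$ to $f$ on a slight enlargement of $B(z_k,r)$ attaining the infimum defining $G_{2,r}(f)(z_k)$, and form the smooth candidate $\widetilde h=\sum_k\chi_k h_k$. Since $\sum_k\overline{\partial}\chi_k=0$, the form $\overline{\partial}\widetilde h=\sum_k(h_k-f)\overline{\partial}\chi_k$ is pointwise dominated by $G_{2,r}(f)$ on neighbouring balls, so H\"ormander's weighted $L^2$-estimate for $\overline{\partial}$ on $\C^n$ (using the strict plurisubharmonicity from \eqref{weights-a}) yields a correction $u$ with $\overline{\partial}u=\overline{\partial}\widetilde h$ and $|u|$ locally controlled by $G_{2,r}(f)$. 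Then $h=\widetilde h-u$ is entire and $|f-h|$ is pointwise dominated by a finite sum of neighbouring $G_{2,r}(f)(z_j)$. Since a bounded entire $h$ gives $H_h=0$, we have $H_f=H_{f-h}=\sum_k H_{\chi_k(f-h)}$, where each localized piece is morally rank-one and satisfies $\|H_{\chi_k(f-h)}\|_{S_p}\lesssim G_{2,r}(f)(z_k)$. Schatten summation estimates then give
$$\|H_f\|_{S_p}^p\lesssim\sum_k G_{2,r}(f)(z_k)^p\simeq\|G_{2,r}(f)\|_{L^p}^p=\|f\|_{\IDA^p}^p.$$

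For necessity, suppose $H_f\in S_p$. The key pointwise bound to establish is
$$G_{2,r}(f)(z)\lesssim\left(\frac{1}{|B(z,2r)|}\int_{B(z,2r)}\|H_f k_w\|_{L^2_\varphi}^2\,dv(w)\right)^{1/2},$$
which follows from the observation that on $B(z,r)$ the modulus $|k_w(\cdot)|e^{-\varphi(\cdot)}$ is pinched between two positive constants depending only on $r$ and the weight constants, so that $H_f k_w=(I-P)(fk_w)$ is comparable in $L^2_\varphi$-norm to the local $L^2$-distance of $f$ to $H(B(z,r))$. To convert this pointwise inequality into an $\IDA^p$ estimate, I would sample on a sublattice and invoke the standard inequality $\sum_j\|T k_{z_j}\|_{L^2_\varphi}^p\lesssim\|T\|_{S_p}^p$ (valid for $p\le 2$ via a near-orthonormality calculation using the off-diagonal kernel decay), together with duality and real interpolation for $p>2$.

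The main obstacle will be the case $0<p<1$, where $S_p$ is only a quasi-Banach ideal and the standard duality/interpolation arguments are unavailable in the usual form. For this range I would replace the duality step in the necessity direction by a direct atomic decomposition of $H_f$ into rank-one operators whose coefficients match the lattice values $G_{2,r}(f)(z_k)$, and invoke the $p$-triangle inequality $\|A+B\|_{S_p}^p\le\|A\|_{S_p}^p+\|B\|_{S_p}^p$. A secondary difficulty is that all the Bergman-kernel estimates used above---pointwise Gaussian decay, near-orthonormality of $\{k_{z_k}\}$ along the lattice, and comparability of the Berezin transform with local averages---must be derived from \eqref{weights-a} alone with constants uniform in $f$, which requires the H\"ormander/Christ-type machinery developed for weights of this class.
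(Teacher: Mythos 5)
First, note that the paper does not prove this theorem at all: it is quoted verbatim from Theorem~1.1 of \cite{HuVi4}, so there is no in-paper proof to compare against. Judged on its own merits, your sketch follows the right general philosophy (lattice localization, local holomorphic approximants, kernel test functions), but both directions contain concrete errors.

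The sufficiency direction proves too much. You pass from the $C^2$ patchwork $\widetilde h=\sum_k\chi_kh_k$ to a genuinely \emph{entire} $h$ with $|f-h|$ locally dominated by neighbouring values of $G_{2,r}(f)$, and then use $H_h=0$. But if such an $h$ existed, then since $f\in L^\infty$ and $G_{2,r}(f)\le\|f\|_\infty$ pointwise, $h$ would be a bounded entire function, hence constant by Liouville; the local domination would then give $MO_{2,r}(f)(z)\lesssim\sum_{j\sim z}G_{2,r}(f)(z_j)$, i.e.\ $f\in\IMO^p$ whenever $f\in\IDA^p\cap L^\infty$. By Theorem~\ref{char2} that would force $H_{\bar f}\in S_p$ whenever $H_f\in S_p$, contradicting Theorem~\ref{p<=1} of this very paper. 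The flaw is the claim that the H\"ormander correction $u$ is ``locally controlled by $G_{2,r}(f)$'': the weighted $L^2$ estimate for $\overline\partial$ is global and does not localize in that way. The argument of \cite{HuVi3,HuVi4} avoids this by \emph{not} making the approximant entire: one writes $f=f_1+f_2$ with $f_1=\sum_k\chi_kh_k$ only $C^2$, controls $H_{f_1}$ through $\overline\partial f_1$ (since $H_{f_1}g$ is the minimal $L^2_\varphi$ solution of $\overline\partial u=g\,\overline\partial f_1$), and controls $H_{f_2}$ through Carleson-type estimates on $|f_2|$.

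The necessity direction rests on a false inequality. The bound $\sum_j\|Tk_{z_j}\|^p\lesssim\|T\|_{S_p}^p$ is valid for $p\ge 2$ (via $\langle |T|^2e_j,e_j\rangle^{p/2}\le\langle |T|^pe_j,e_j\rangle$) but \emph{fails} for $p<2$ even for an exactly orthonormal system: a rank-one $T=\langle\cdot,u\rangle v$ with $u=N^{-1/2}(e_1+\dots+e_N)$ gives $\sum_j\|Te_j\|^p=N^{1-p/2}\to\infty$ while $\|T\|_{S_p}=1$. So the range you treat as routine ($p\le 2$) is precisely the hard one, and your proposed remedy for $p<1$ --- an atomic decomposition plus the $p$-triangle inequality --- produces an \emph{upper} bound for $\|H_f\|_{S_p}$, whereas necessity requires a \emph{lower} bound of the form $\|H_f\|_{S_p}^p\gtrsim\sum_jG_{2,r}(f)(z_j)^p$. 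Closing this gap is the substantial part of \cite{HuVi4} (adapting Luecking's rank-counting argument from \cite{Luecking1992}); your initial pointwise estimate $G_{2,r}(f)(z)\lesssim\|H_fk_z\|_{L^2_\varphi}$ is correct and is indeed the starting point there, but the passage from it to $\|G_{2,r}(f)\|_{L^p}\lesssim\|H_f\|_{S_p}$ for $0<p<2$ is exactly what remains unproved in your sketch.
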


We also need a result on simultaneous Schatten class membership of Hankel operators. For this purpose, for $0<s\le\infty$, we define the space $\IMO^s$ (of functions of integral mean oscillation) by
\begin{equation}
	\IMO^s = \left\{ f\in L^2_{\rm loc} : MO_{2,r}(f) \in L^s\right\}.
\end{equation}
Observe that the space $\IMO^s$ is independent of $r$. The following result is contained in Theorem~6.2 of \cite{HuVi4}. 

\begin{theorem}\label{char2}
Let $f\in L^\infty$, $\varphi\in C^2(\C^n)$ be real valued with $\mathrm{i} \partial \overline{\partial} \varphi \simeq \omega_0$, and $0<p<\infty$. Then both $H_f$ and $H_{\bar f}$ are in ${S_p}(F^2_\varphi, L^2_\varphi)$ if and only if $f\in \IMO^p$.
\end{theorem}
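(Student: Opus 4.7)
My plan is to reduce the theorem, via Theorem~\ref{char1}, to the function-space equivalence
\[
f\in\IMO^p \iff f,\bar f\in\IDA^p,
\]
and then verify this through a pointwise comparison between $MO_{2,r}$ and the pair $G_{2,r}(f), G_{2,r}(\bar f)$. The reduction is immediate: Theorem~\ref{char1} gives $H_f\in S_p\iff f\in\IDA^p$ and, applied to $\bar f$, $H_{\bar f}\in S_p\iff\bar f\in\IDA^p$. The forward implication of the displayed equivalence is also immediate, since the constant $\widehat f_r(z)$ is a holomorphic candidate in the infimum defining $G_{2,r}(f)(z)$, yielding $G_{2,r}(f)\le MO_{2,r}(f)$; together with $MO_{2,r}(\bar f)=MO_{2,r}(f)$ this gives both $G_{2,r}(f),G_{2,r}(\bar f)\le MO_{2,r}(f)$.

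For the nontrivial implication, I would fix $r>0$ and, for each $z\in\C^n$, choose near-optimal $h_z,g_z\in H(B(z,r))$ with
\[
\frac{\|f-h_z\|_{L^2(B(z,r))}}{|B(z,r)|^{1/2}}\le 2G_{2,r}(f)(z),\qquad \frac{\|\bar f-g_z\|_{L^2(B(z,r))}}{|B(z,r)|^{1/2}}\le 2G_{2,r}(\bar f)(z).
\]
The pluriharmonic function $u_z=h_z-\overline{g_z}$ then satisfies, by the triangle inequality,
\[
\|u_z\|_{L^2(B(z,r))}\le C\,|B(z,r)|^{1/2}\bigl(G_{2,r}(f)(z)+G_{2,r}(\bar f)(z)\bigr).
\]
Since $MO_{2,r}(f)(z)$ is the $L^2$-distance from $f$ to the best constant on $B(z,r)$, another triangle inequality yields
\[
MO_{2,r}(f)(z)\le 2G_{2,r}(f)(z)+\frac{\|h_z-h_z(z)\|_{L^2(B(z,r))}}{|B(z,r)|^{1/2}},
\]
and the proof reduces to establishing the pointwise bound $\|h_z-h_z(z)\|_{L^2(B(z,r))}\le\|u_z\|_{L^2(B(z,r))}$.

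The heart of the argument is the observation that on a Euclidean ball the constants, the holomorphic monomials $(w-z)^\alpha$ with $\alpha\neq 0$, and the antiholomorphic monomials $\overline{(w-z)^\beta}$ with $\beta\neq 0$ are mutually orthogonal in $L^2(B(z,r))$. This follows from the invariance of $B(z,r)$ under the diagonal torus action $w_j\mapsto z_j+e^{i\theta_j}(w_j-z_j)$: integrating the phase factor $e^{i\sum(\alpha_j-\beta_j)\theta_j}$ over the torus kills every cross term. Expanding $h_z$ and $g_z$ in Taylor series at $z$ and applying this orthogonality decomposes $\|u_z\|_{L^2}^2$ as the sum of three nonnegative pieces, one of which is exactly $\|h_z-h_z(z)\|_{L^2}^2$. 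This gives the required inequality, and taking $L^p$ norms in $z$ yields $MO_{2,r}(f)\in L^p$.

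I expect the main obstacle to be the clean verification of the orthogonality step in several variables; in one variable it is elementary via polar coordinates, but in $\C^n$ one must invoke the torus (or full $U(n)$) invariance of the ball rather than reason coordinatewise. A secondary technicality is that the near-optimal approximants $h_z,g_z$ need not depend measurably on $z$, but this causes no trouble because the argument produces the pointwise estimate $MO_{2,r}(f)(z)\le C(G_{2,r}(f)(z)+G_{2,r}(\bar f)(z))$, which passes to $L^p$ norms without further choices.
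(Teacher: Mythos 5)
Your argument is correct and is essentially the route taken in the source: the paper itself does not prove Theorem~\ref{char2} but cites Theorem~6.2 of \cite{HuVi4}, whose proof likewise reduces the statement via the $\IDA^p$ characterization (Theorem~\ref{char1}) to the pointwise two-sided estimate $MO_{2,r}(f)\simeq G_{2,r}(f)+G_{2,r}(\bar f)$, obtained by exactly your decomposition of the pluriharmonic difference $h_z-\overline{g_z}$ into mutually orthogonal constant, holomorphic, and antiholomorphic parts. The only minor points worth recording are that the infimum defining $G_{2,r}(f)(z)$ is actually attained (it is the distance in $L^2(B(z,r))$ to the closed subspace $A^2(B(z,r))$, so the factor $2$ is harmless even when the infimum vanishes), and that the orthogonality and mean-value identities you invoke hold for arbitrary $L^2$ holomorphic functions on the ball, so the Taylor-expansion step causes no convergence issues.
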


Related to the preceding theorem, we note that previously Xia and Zheng~\cite{XZ2004} found a different characterization (in terms of ``standard deviation'') for simultaneous membership for the classical Fock space $F^2$ and the Schatten classes $S_p$ when $1\le p<\infty$.

It is also worth noting that the simultaneous Schatten class membership of $H_f$ and $H_{\bar f}$ on weighted Bergman spaces can be formulated in terms of $\IMO^p$ on the unit disk; see, e.g., Section~4 of~\cite{MR4514504} for an approach inspired by Luecking's work~\cite{Luecking1992} and the references therein for other approaches and the case of Bergman spaces on the unit ball. However, related to Open Problem~\ref{q1}, this does not seem to provide any further insight.

\medskip

We are now ready to prove our main result.

\begin{theorem}\label{p<=1}
Suppose that $\varphi\in C^2(\C)$ satisfies the condition in~\eqref{weights-a}. If $0<p\le 1$, then there is no Berger-Coburn phenomenon for the Schatten class $S_p(F^2_\varphi(\C), L^2_\varphi(\C))$, that is, there is a symbol $f\in L^\infty(\C)$ such that $H_f\in S_p$ but $H_{\bar f}\not\in S_p$.
\end{theorem}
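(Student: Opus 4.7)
The plan is to take Xia's symbol $f$ from~\eqref{e:ex} as the counterexample and verify exactly two things, namely $f \in \IDA^p$ and $f \notin \IMO^p$. By Theorem~\ref{char1}, the first condition gives $H_f \in S_p(F^2_\varphi, L^2_\varphi)$ for every $p>0$; by Theorem~\ref{char2}, joint membership would force $f \in \IMO^p$, so the second condition then forces $H_{\bar f} \notin S_p$. This routes Xia's example through the weight-robust characterizations of Hu--Virtanen and thereby avoids any use of explicit orthonormal bases of $F^2$.

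For $f \in \IDA^p$ at every $p>0$, I would observe that $f(w) = 1/w$ is already holomorphic on $\C \setminus \overline{\D}$, so as soon as $B(z,r) \subset \{|w|>1\}$, i.e.\ $|z|>1+r$, the choice $h(w)=1/w$ in~\eqref{G-q-r} gives $G_{2,r}(f)(z)=0$. On the remaining bounded set $|z| \le 1+r$, the trivial choice $h=0$ together with $\|f\|_\infty \le 1$ bounds $G_{2,r}(f)(z) \le 1$. Hence $G_{2,r}(f)$ is bounded and compactly supported, so belongs to $L^p(\C)$ for all $p>0$.

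For $f \notin \IMO^p$ when $0<p\le 1$, I would compute $MO_{2,r}(f)(z)$ for $|z|>1+r$. There $f$ agrees on $B(z,r)$ with the holomorphic function $w\mapsto 1/w$, so the mean value property gives $\widehat f_r(z) = 1/z$, and the identity
$$
	\left|\frac{1}{w} - \frac{1}{z}\right| = \frac{|w-z|}{|w|\,|z|}
$$
combined with $|w| \asymp |z|$ on $B(z,r)$ yields the two-sided estimate
$$
	MO_{2,r}(f)(z) \asymp \frac{1}{|z|^2} \qquad (|z| \to \infty).
$$
Integrating in polar coordinates,
$$
	\int_{\C} MO_{2,r}(f)(z)^p\, dv(z) \gtrsim \int_{|z|>2r} |z|^{-2p}\, dv(z) = 2\pi \int_{2r}^\infty t^{1-2p}\, dt,
$$
which diverges precisely when $p \le 1$. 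Hence $f \notin \IMO^p$, completing the argument.

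The only genuine computation is the lower bound $MO_{2,r}(f)(z) \gtrsim |z|^{-2}$, and even this is elementary in one complex variable; the main obstacle the result faced historically was not this calculation but rather the lack of a suitable $\IMO^p$-type characterization for weighted Fock spaces, and that obstacle is already removed by Theorems~\ref{char1} and~\ref{char2}. In effect, once the $\IDA^p/\IMO^p$ machinery is available, Xia's symbol ports unchanged from $F^2$ to every $F^2_\varphi$ satisfying~\eqref{weights-a}.
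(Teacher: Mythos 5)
Your proposal is correct and follows essentially the same route as the paper: it takes Xia's symbol, verifies $f\in\IDA^p$ by noting $G_{2,r}(f)$ vanishes where $f$ is holomorphic and is bounded on the remaining compact set, and then shows $f\notin\IMO^p$ via the mean value property and the asymptotic $MO_{2,r}(f)(z)\simeq |z|^{-2}$, concluding with Theorems~\ref{char1} and~\ref{char2} exactly as the paper does. No gaps.
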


\begin{proof}
Let $f$ be the function given in \eqref{e:ex}, that is, for $z\in \C$, $f(z) = \frac1z$ when $|z| \ge 1$ and $f(z)=0$ otherwise. As mentioned above, the definition of $\IDA^p$ is independent of $r$, and we set $r=1$ for simplicity. Also we write $C$ for positive constants which may change from line to line but do not depend on functions being considered.

Now if $|z|\ge 2$, then $f$ is holomorphic in $B(z,1)$, and hence trivially
$$
	\left(G_1(f)(z)\right)^2 = \inf_{h\in H\left(B(z,1)\right)} \frac1{|B(z,1)|} \int_{B(z,1)} |f-h|^2 dv= 0.
$$
Also it is easy to see that there is a constant $C$ such that
$$
	G_1(f)(z) < C
$$
for all $|z|<2$. Thus, $\|G_1(f)\|_{L^p} < C$, so $f\in \IDA^p$ and Theorem~\ref{char1} implies that $H_f \in S_p$.

To show that $H_{\bar f}$ is not in the Schatten class $S_p$, we apply Theorem~\ref{char2}. If $|z|\ge 2$, then $\bar f$ is harmonic in $B(z,1)$ and hence by the mean value property, we have
$$
	\widehat{\bar f_1}(z) = \frac1{|B(z,1)|} \int_{B(z,1)} \bar f dv = \overline{f(z)}.
$$
Therefore, for $|z|\ge 2$,
\begin{align*}
	\left(MO_{2,1}(f)(z)\right)^2 &= \frac1{|B(z,1)|} \int_{B(z,1)} \left| \overline{f(w)}-\overline{f(z)}\right|^2 dv(w)\\
	&=\frac1{|B(z,1)|} \int_{B(z,1)} \frac{|w-z|^2}{|wz|^2} dv(w)\\
	&\simeq \frac1{|z|^4},
\end{align*}
and so
\begin{align*}
	\int_\C (MO_{2,1}(\bar f)(z))^p\, dv(z) &\simeq \int_\C \min\left\{1, \frac1{|z|^{2p}}\right\} dv(z)\\
	&= \begin{cases}
	C<+\infty & {\rm if}\ p>1\\
	+\infty & {\rm if}\ 0< p\le 1.
	\end{cases}
\end{align*}
Thus, by Theorem \ref{char2}, $H_{\bar f}\notin S_p$ if $0<p\le 1$.
\end{proof}

We finish this section with open problems of whether the statements of Theorems~\ref{p>1} and~\ref{p<=1} and the theorem of Berger and Coburn remain true for doubling Fock spaces. Recall that a positive Borel measure $\nu$ on $\C$ is called a doubling measure if there exists some constant $M>0$ such that
$$
	\nu(D(z, 2r))\leq M\nu(D(z, r))
$$
for $z \in \C$ and $r>0$, where $D(z,r)=\left\{w\in\C : |w-z|<r \right\}$. Let $\phi$ be a subharmonic function that is not identically zero on $\C$ and suppose that $\nu= \Delta\phi\, dv$ is doubling. We call $F^2_\phi$ doubling Fock spaces. Observe that if $\varphi\in C^2(\C)$ satisfies the condition in~\eqref{weights-a}, then $F^2_\varphi$ is a doubling Fock space.

\begin{question}
Let $F^2_\phi$ be a doubling Fock space. For which values of $p$ does the Berger-Coburn phenomenon hold for $S_p(F^2_\phi, L^2_\phi)$?
\end{question}

In fact, a similar question seems open also for compactness:

\begin{question}
Let $F^2_\phi$ be a doubling Fock space and $f\in L^\infty$. Is it true that if $H_f : F^2_\phi \to L^2_\phi$ compact, then $H_{\bar f}$ is also compact?
\end{question}

\section{Unbounded symbols}

In addition to bounded symbols, Hankel operators have been widely studied for unbounded symbols. The usual way of defining them is as follows.  Suppose that $\varphi\in C^2(\C^n)$ satisfies \eqref{weights-a} and let
$$
	\Gamma = \linspan \{ K_z : z\in \C^n\},
$$
where $K_z=K(\cdot, z)$ is the reproducing kernel of $F^2_\varphi$ (see Section~2.1 of~\cite{HuVi4}), and consider the class of symbols
$$
 	\mathcal S=\left \{f \textrm{ measurable on }\C^n: f g\in L^2_\varphi\ \textrm{for}\ g \in \Gamma\right \}.
$$
Given $f \in \mathcal S$ and $g\in \Gamma$, the Hankel operator $H_f$ is well defined, and since $\Gamma$ is dense in $F^2_\varphi$, it follows that $H_f = (I-P)M_f$ is densely defined on $F^2_\varphi$. Notice that clearly $L^\infty\subset\mathcal{S}$.

The following example shows that there is no Berger-Coburn phenomenon for compact or Schatten class Hankel operators on $F^2_\varphi$ for unbounded symbols in general.

\begin{example}
Let $f(z) = z$ for $z\in\C^n$. Then $H_f=0$ is in $S_p$ for all $0<p<\infty$, but it is easy to see that $\bar f\notin \IMO^p$ for any $0<p<\infty$. In fact, we can show that $\bar f\notin \IMO^\infty$ and hence $H_{\bar f}$ cannot be compact because $H_f$ and $H_{\bar f}$ are simultaneously compact if and only if $f\in \IMO^\infty$ (see~\cite{HuVi3}).

We note that the function $f(z)=z$ defined for $z\in\C$ appeared in the example on page 2995 of~\cite{Bauer}, where it was used to show that there are unbounded functions for which $H_f\in S_2$ but $H_{\bar f}\notin S_2$. The preceding paragraph shows how this and more can be seen very easily using our characterizations, which avoids elementary calculations of~\cite{Bauer}.
\end{example}

In fact, the preceding example can be extended to all polynomials of degree at least one. We omit the details and instead show that if $f: \C\to \C$ is entire with $f'\neq 0$, then $H_f = 0\in S_p$ for all $0<p<\infty$ but $H_{\bar f}$ is not in $S_p$. Indeed,
$$
	MO_2 f(z) = \frac1{|B(z,1)|} \int_{B(z,1)} |f(w)-f(z)|^2\, dv(w) = \widehat{|f|^2}(z) - |f(z)|^2,
$$
and so $MO_2(f)\notin L^p$ and it follows that $H_{\bar f}\notin S_p$ as before.

On the other hand, we can give examples of unbounded functions for which BCP holds. Obviously, we need to consider functions that are not entire. Examples include functions $f$ on $\C^n$ so that $f\in L^2(B(0, 1))$ and $f=0$ outside $B(0, 1)$, which can be further modified so that $f$ is not bounded outside any compact set. We finish with the following general question.

\begin{question}
Characterize unbounded symbols $f$ for which BCP holds for compact operators or for Schatten class operators $H_f$ on Fock spaces.
\end{question}

\section*{Acknowledgments}

Most of this work was presented at the 33rd International Workshop on Operator Theory and its Applications (IWOTA) in Kraków by the second author, who would like to thank the main organizers, M. Ptak and M. Wojtylak, for the invitation.

\end{document}